\newtheorem{theorem}{Theorem}
\numberwithin{theorem}{section}
\numberwithin{equation}{section}
\newtheorem{lemma}[theorem]{Lemma}
\newtheorem{proposition}[theorem]{Proposition}
\newtheorem{definition}[theorem]{Definition}
\newtheorem{claim}[theorem]{Claim}
\begin{document}

\title{On a Cheeger type inequality in Cayley graphs of finite groups }
\author{Arindam Biswas}

\address{Universit\"at Wien, Fakult\"at f\"ur Mathematik\\
	Oskar-Morgenstern-Platz 1, 1090 Wien, Austria. }
\email{biswasa43@univie.ac.at}

\keywords{Cheeger inequality, expander graphs, finite Cayley graphs}

\begin{abstract}
  Let $G$ be a finite group. It was remarked in \cite{myfav54} that if the Cayley graph $C(G,S)$ is an expander graph and is non-bipartite then the spectrum of the adjacency operator $T$ is bounded away from $-1$. In this article we are interested in explicit bounds for the spectrum of these graphs. Specifically, we show that the non-trivial spectrum of the adjacency operator lies in the interval $\left[-1+\frac{h(\mathbb{G})^{4}}{\gamma}, 1-\frac{h(\mathbb{G})^{2}}{2d^{2}}\right]$, where $h(\mathbb{G})$ denotes the (vertex) Cheeger constant of the $d$ regular graph $C(G,S)$ with respect to a symmetric set $S$ of generators and $\gamma = 2^{9}d^{6}(d+1)^{2}$.
	
\end{abstract}

\maketitle	
	
\section{Introduction}
Throughout this article we will consider a finite group $G$ with $|G|=n$. We will denote by $C(G,S)$ for a symmetric subset $S\subset G$ of size $|S|=d$, to be the Cayley graph of $G$ with respect to $S$. Then $C(G,S)$ is $d$ regular. Given a finite $d$ regular Cayley graph $C(G,S)$, we have the normalised adjacency matrix $T$ of size $n\times n$ whose eigenvalues lie in the interval $[-1,1]$. The normalised Laplacian matrix of $C(G,S)$ denoted by $L$ is defined as 
\begin{equation}\label{2.1}
L := I_{n} - T,
\end{equation}
where $I_{n}$ denotes the identity matrix. The eigenvalues of $L$ lie in the interval $[0,2]$. \\
 It is easy to see that $1$ is always an eigenvalue of $T$ and $0$ that of $L$. We denote the eigenvalues of $T$ as $-1\leqslant t_{n}\leqslant ... \leqslant t_{2}\leqslant t_{1} = 1$ and that of $L$ as $\lambda_{i}=1-t_{i}, i=1,2,...,n$. The graph $C(G,S)$ is connected if and only if $\lambda_{2} >0$ (equivalently $t_{2}<1$). The graph is bipartite if and only if $\lambda_{n} = 2$. \\

We recall the notion of Cheeger constant. 
\begin{definition}[Vertex boundary of a set]
	Let $\mathbb{G} = (V,E)$ be a graph with vertex set $V$ and edge set $E$. For a subset $V_{1}\subset V$, let $N(V_{1})$ denoting the neighbourhood of $V_{1}$ be
	$$N(V_{1}) := \lbrace v\in V : vv_{1}\in E \text{ for some } v_{1}\in V_{1}\rbrace.$$
	Then the boundary of $V_{1}$ is defined as $ \delta(V_{1}) := N(V_{1})\backslash V_{1}$.
\end{definition}

\begin{definition}[Cheeger constant]\label{vertex-Cheeger}
	The Cheeger constant of the graph $\mathbb{G} = (V,E)$, denoted by $h(\mathbb{G})$ is defined as 
	$$h(\mathbb{G}) := \inf \lbrace \frac{|\delta(V_{1})|}{|V_{1}|} : V_{1}\subset V, |V_{1}|\leqslant \frac{|V|}{2}\rbrace.$$
	This is also called the vertex Cheeger constant of a graph.
\end{definition}

\begin{definition}[$(n,d,\epsilon)$ expander]\label{vexp}
	Let $\epsilon>0$. An $(n,d,\epsilon)$ expander is a graph $(V,E)$ on $|V| =n$ vertices, having maximal degree $d$, such that for every set $V_{1}\subseteq V$ satisfying $|V_{1}|\leqslant \frac{n}{2}$, $|\delta(V_{1})|\geqslant \epsilon|V_{1}|$ holds (equivalently, $h(\mathbb{G})\geqslant \epsilon).$
\end{definition}

In this article, we are interested in the spectrum of the Laplace operator $L$ for the Cayley graph $C(G,S)$. The Cayley graph is bipartite if and only if there exists an index two subgroup $H$ of $G$ which is disjoint from $S$. See Proposition \ref{prop}. It was observed in \cite{myfav54}(Appendix E) that if $C(G,S)$ is an expander graph and is non-bipartite, then the spectrum of $T$ is not only bounded away from $1$ but also from $-1$. Here we show that 

\begin{theorem}\label{mainthm}
	Let the Cayley graph $C(G,S)$ be an expander with $|S| = d$ and $h(G)$ denote its Cheeger constant. Then if $C(G,S)$ is non-bipartite, we have
	$$\lambda_{n}\leqslant 2 - \frac{h(G)^{4}}{\alpha d^{6}(d+1)^{2}}, $$
	where $\lambda_{n}$ is the largest eigenvalue of the normalised Laplacian matrix and $\alpha$ is an absolute constant (we can take $\alpha = 2^{9}$).
\end{theorem}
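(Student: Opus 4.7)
The plan is to lower bound $2-\lambda_n = 1+t_n$ by a two-step reduction: passing to an auxiliary bipartite graph whose spectral gap equals $1+t_n$, and then combining the vertex Cheeger inequality for this auxiliary graph with a quantitative non-bipartiteness estimate that exploits the Cayley structure of $\mathbb{G}=C(G,S)$.

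\textbf{Step 1: Bipartite double cover.} Let $\tilde{\mathbb{G}}$ denote the bipartite double cover of $\mathbb{G}$, with vertex set $G\times\{0,1\}$ and edges $(g,0)\sim(gs,1)$ for $g\in G$ and $s\in S$. This is a $d$-regular bipartite graph on $2n$ vertices, and a direct calculation shows that its normalized adjacency eigenvalues are precisely $\pm t_i$ for $i=1,\dots,n$. Consequently its second-smallest Laplacian eigenvalue satisfies $\tilde{\lambda}_2=\min(1-t_2,\,1+t_n)\leq 1+t_n$. Applying the vertex Cheeger inequality (in the same form already used to bound $t_2$ from above) to $\tilde{\mathbb{G}}$ yields
$$ 1+t_n \;\geq\; \tilde{\lambda}_2 \;\geq\; \frac{\tilde{h}^{\,2}}{2d^2},$$
where $\tilde{h}$ is the vertex Cheeger constant of $\tilde{\mathbb{G}}$. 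Thus it suffices to lower bound $\tilde{h}$.

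\textbf{Step 2: Bounding $\tilde{h}$ from below.} The key lemma is that under the non-bipartite hypothesis,
$$ \tilde{h} \;\geq\; \frac{h(\mathbb{G})^2}{16\,d^2(d+1)}; $$
combined with Step 1 this yields $2-\lambda_n \geq h(\mathbb{G})^4/(2^9 d^6(d+1)^2)$, which is exactly the theorem. To establish the lemma I would fix a subset $U\subset G\times\{0,1\}$ with $|U|\leq n$ realising $\tilde{h}$ and write $U=(A\times\{0\})\sqcup(B\times\{1\})$, so that $|\delta_{\tilde{\mathbb{G}}}(U)| = |N_{\mathbb{G}}(A)\setminus B| + |N_{\mathbb{G}}(B)\setminus A|$. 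A case analysis then splits along the sizes of $A,B$: when $|A|,|B|\leq n/2$, the vertex Cheeger constant of $\mathbb{G}$ gives $|N_{\mathbb{G}}(A)\setminus A|\geq h(\mathbb{G})|A|$, and this is converted to a bound on $|N(A)\setminus B|$ by a routine combinatorial adjustment using $|A|+|B|\leq n$. The delicate case is when one of $|A|,|B|$ exceeds $n/2$: then $(A,V\setminus A)$ resembles a bipartition of $V$, and non-bipartiteness must be invoked to force enough monochromatic edges in $\mathbb{G}$ to contribute to $|\delta_{\tilde{\mathbb{G}}}(U)|$.

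\textbf{Main obstacle.} The hardest piece is the large-size case of Step 2, which requires turning the group-theoretic non-bipartite hypothesis (no index-two subgroup of $G$ is disjoint from $S$, cf.\ Proposition \ref{prop}) into a quantitative combinatorial statement: every attempted bipartition of $V$ must contain a controlled number of monochromatic edges, with the lower bound expressed in terms of $h(\mathbb{G})$ and $d$. A plausible route is to use the $G$-translation symmetry of the Cayley graph, whereby a single monochromatic edge (guaranteed by non-bipartiteness for any bipartition) propagates to many by $G$-equivariance, combined with the vertex Cheeger of $\mathbb{G}$ applied to carefully chosen level sets of the partition indicator. The factors $d^2(d+1)$ appearing in the lemma, and hence $d^6(d+1)^2$ in the theorem, are consistent with chaining two vertex-Cheeger-type inequalities through an auxiliary graph on $G$ whose generating set is a modification of $S$, for instance $S\cup\{e\}$, adjoining the identity to kill bipartiteness explicitly and accounting for the extra $(d+1)$.
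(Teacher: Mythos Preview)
Your framework via the bipartite double cover is a legitimate and natural reformulation, but it is genuinely different from what the paper does, and the hard part is left unproven.

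\textbf{What the paper does instead.} The paper never passes to the double cover. It works with $T^2$: if $T$ has an eigenvalue below $-1+\zeta$, then $T^2$ (the adjacency operator for the multi-set $S'=S\cdot S$) has second eigenvalue above $(1-\zeta)^2$, so by Cheeger--Buser there is a set $A\subset G$, $|A|\le |G|/2$, with $|S^2A\setminus A|<\beta|A|$ for $\beta=d^2\sqrt{2\zeta(2-\zeta)}$. A combinatorial analysis (Lemma~\ref{lem2}) then shows $|A|$ is close to $|G|/2$ and that $sA$ is close to $A^c$ for every $s\in S$. The decisive step is Freiman's trick: the set $H=\{g\in G:|A\cap Ag|\ge r|A|\}$ is shown to be an index-two subgroup disjoint from $S$, contradicting non-bipartiteness via Proposition~\ref{prop}. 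All constants are tracked explicitly and yield $\alpha=2^9$.

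\textbf{Where your proposal has a gap.} Your Step~1 is fine, but Step~2---the inequality $\tilde h\ge h(\mathbb G)^2/(16d^2(d+1))$---is the entire content of the theorem, and you do not prove it. Two concrete issues:
\begin{itemize}
\item Your case split by the \emph{sizes} of $A$ and $B$ is not the right dichotomy. The bad case for $\tilde h$ is when $(A,B)$ is close to a bipartition of $G$, and this can perfectly well occur with $|A|=|B|=n/2$, i.e.\ inside your ``easy'' case. In that situation $|N(A)\setminus A|\ge h|A|$ gives no control on $|N(A)\setminus B|$ (take $B=A^c$ with $SA\subset B$), so the ``routine combinatorial adjustment'' does not exist. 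Non-bipartiteness must already enter here.
\item For the hard case you only offer heuristics (``a single monochromatic edge propagates by $G$-equivariance''). This is not a proof: one monochromatic edge does not obviously produce $\Omega(h^2/d^2)\cdot|U|$ boundary vertices, and making this quantitative is exactly where the paper invests its effort---via the Freiman subgroup construction, not via edge propagation.
\end{itemize}
In effect, your proposal translates the problem into the language of the double cover but does not solve it; the constants in your Step~2 lemma appear to be reverse-engineered from the target rather than derived. If you want to pursue this route, the substantive argument you would need is essentially isomorphic to the paper's: from a set $U$ with small $\tilde{\mathbb G}$-boundary, extract a near-bipartition of $G$ and then build an index-two subgroup avoiding $S$.
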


The strategy of the proof closely follows the combinatorial arguments of Breuillard--Green--Guralnick--Tao in \cite{myfav54}.

\section{Proofs}
There are two notions of expansion in graphs - the vertex expansion as in Definition \ref{vexp} and the edge expansion.
\begin{definition}[Edge expansion]
		Let $\mathbb{G} = (V,E)$ be a $d$-regular graph with vertex set $V$ and edge set $E$. For a subset $V_{1}\subset V$, let $E(V_{1},V\backslash V_{1})$ be the edge boundary of $V_{1}$, defined as
	$$E(V_{1},V\backslash V_{1}) := \lbrace (v_{1},s)\in E: v_{1}\in V, v_{1}s\in V\backslash V_{1} \rbrace .$$
	Then the edge expansion ratio $\phi(V_{1})$ is defined as 
	$$\phi(V_{1}) := \frac{|E(V_{1},V\backslash V_{1})|}{d|V_{1}|}.$$
\end{definition}
\begin{definition}[Edge-Cheeger constant]\label{Edge-Cheeger}
	The edge-Cheeger constant denoted by $\mathfrak{h}(\mathbb{G})$ is 
	$$\mathfrak{h}(\mathbb{G}):= \inf_{V_{1}\subset V, |V_{1}|\leqslant |V|/2} \phi(V_{1}).$$
\end{definition}

In a $d$ regular graph the two Cheeger constants are related by the following -
\begin{lemma}\label{Lem-vertex-edge}
		Let $\mathbb{G} = (V,E)$ be a $d$-regular graph
	$$ \frac{h(\mathbb{G})}{d} \leqslant \mathfrak{h}(\mathbb{G}) \leqslant h(\mathbb{G}).$$

\end{lemma}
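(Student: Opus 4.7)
The plan is to prove the two inequalities separately for an arbitrary $V_1 \subset V$ with $|V_1|\leq |V|/2$ by comparing the size of the edge boundary $|E(V_1, V\setminus V_1)|$ with the size of the vertex boundary $|\delta(V_1)|$, and then pass to the infimum. The key structural observation is that every edge in $E(V_1, V\setminus V_1)$ has exactly one endpoint in $V_1$ and exactly one endpoint in $\delta(V_1)$, and that $d$-regularity caps the number of such edges incident to a given outside vertex.

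For the upper bound $\mathfrak{h}(\mathbb{G}) \leq h(\mathbb{G})$, I would note that each vertex $v \in \delta(V_1)$ is incident to at most $d$ edges in total and therefore contributes at most $d$ edges to $E(V_1, V\setminus V_1)$. Summing over $\delta(V_1)$ gives $|E(V_1,V\setminus V_1)| \leq d\,|\delta(V_1)|$, hence $\phi(V_1) \leq |\delta(V_1)|/|V_1|$. Taking the infimum over admissible $V_1$ on both sides yields $\mathfrak{h}(\mathbb{G}) \leq h(\mathbb{G})$.

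For the lower bound $h(\mathbb{G})/d \leq \mathfrak{h}(\mathbb{G})$, I would use that, by the very definition of $\delta(V_1)$, every $v \in \delta(V_1)$ has at least one neighbour inside $V_1$, so it contributes at least one edge to $E(V_1, V\setminus V_1)$. Summing gives $|\delta(V_1)| \leq |E(V_1, V\setminus V_1)|$. Dividing through by $d|V_1|$ we obtain $|\delta(V_1)|/(d|V_1|) \leq \phi(V_1)$; combining with the trivial bound $h(\mathbb{G}) \leq |\delta(V_1)|/|V_1|$ gives $h(\mathbb{G})/d \leq \phi(V_1)$ for every admissible $V_1$, and the infimum yields the desired inequality.

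The only subtlety worth flagging is the bookkeeping convention used in the definition of $E(V_1, V\setminus V_1)$ (the paper parametrises edges as pairs $(v_1, s)$, reflecting the Cayley structure, so an undirected edge may be counted with multiplicity); since the same convention enters both the numerator of $\phi(V_1)$ and the incidence counts above, any such factor cancels cleanly in the ratio. I do not foresee a real obstacle — the lemma is a short, purely combinatorial double inequality, and the only input beyond the definitions is the trivial degree bound coming from $d$-regularity.
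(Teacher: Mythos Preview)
Your proof is correct and essentially identical to the paper's: the paper packages the same double counting as a single map $\psi: E(V_1, V\setminus V_1) \to \delta(V_1)$, $(v_1,s)\mapsto v_1 s$, and reads off the two inequalities from the facts that $\psi$ is surjective and at most $d$-to-$1$. Your ``each boundary vertex contributes at least one and at most $d$ crossing edges'' is exactly this observation in counting language.
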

\begin{proof}
	Let $V_{1}\subset V$ and we consider the map $$\psi:E(V_{1},V\backslash V_{1}) \rightarrow \delta(V_{1}) \text{ given by }(v_{1},s)\rightarrow v_{1}s.$$ 
	The map is surjective hence we have the left hand side and at the worst case $d$ to $1$ wherein we get the right hand side.
\end{proof}
We have the following inequalities, called the discrete Cheeger-Buser inequality. It is the version for graphs of the corresponding inequalities for the Laplace-Beltrami operator on closed Riemannian manifolds. It was first proven by Cheeger \cite{myfav77} (lower bound) and by Buser \cite{myfav78} (upper bound). The discrete version was shown by Alon and Millman  \cite{myfav79} (Proposition \ref{chin}).
\begin{proposition}[Discrete Cheeger-Buser inequality]\label{chin}
	Let $\mathbb{G} = (V,E)$ be a finite $d$-regular graph. Let $\lambda_{2}$ denote the second smallest eigenvalue of its normalised Laplacian matrix and $\mathfrak{h}(\mathbb{G})$ be the (edge) Cheeger constant. Then 
	$$ \frac{\mathfrak{h}(\mathbb{G})^{2}}{2} \leqslant \lambda_{2} \leqslant 2\mathfrak{h}(\mathbb{G}).$$
\end{proposition}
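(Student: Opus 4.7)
The plan is to prove the two inequalities of Proposition \ref{chin} separately via the Rayleigh--Courant characterization
\begin{equation*}
\lambda_2 \;=\; \min_{\substack{f \not\equiv 0 \\ \sum_v f(v) = 0}} R(f), \qquad R(f) \;:=\; \frac{\frac{1}{d}\sum_{uv \in E}(f(u)-f(v))^2}{\sum_v f(v)^2},
\end{equation*}
which holds because $L = I - \tfrac{1}{d}A$ is symmetric with kernel containing $\mathbb{R}\mathbf{1}$. Each inequality will be obtained by producing either an optimal test function or a pointwise/layer-cake lower estimate on $R$.

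For the upper bound $\lambda_2 \leqslant 2\mathfrak{h}(\mathbb{G})$ (Buser direction), I would fix $\epsilon > 0$ and pick $V_1 \subset V$ with $|V_1| \leqslant n/2$ achieving $\phi(V_1) \leqslant \mathfrak{h}(\mathbb{G}) + \epsilon$. I plug in the step function $f := |V \setminus V_1|\mathbf{1}_{V_1} - |V_1|\mathbf{1}_{V \setminus V_1}$, which is orthogonal to constants. A short computation (only edges between $V_1$ and $V \setminus V_1$ contribute to the Dirichlet form) gives
\begin{equation*}
R(f) \;=\; \frac{|E(V_1, V \setminus V_1)|}{d}\Bigl(\tfrac{1}{|V_1|} + \tfrac{1}{|V \setminus V_1|}\Bigr) \;\leqslant\; 2\phi(V_1),
\end{equation*}
using $|V \setminus V_1| \geqslant |V_1|$. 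Sending $\epsilon \to 0$ gives the claim.

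For the lower bound $\mathfrak{h}(\mathbb{G})^2/2 \leqslant \lambda_2$ (Cheeger direction), take $f$ an eigenfunction of $L$ for $\lambda_2$; after possibly flipping sign, assume $\{v : f(v) > 0\}$ has size at most $n/2$, and set $g := f^+$. Step one is to verify $R(g) \leqslant R(f) = \lambda_2$ by a pointwise argument: at every $v$ with $g(v) > 0$ one has $g \geqslant f$ on the neighborhood, so $(Lg)(v) \leqslant (Lf)(v) = \lambda_2 g(v)$, while at vertices with $g(v)=0$ the contribution to $\langle Lg, g\rangle$ vanishes. Step two is to bound $R(g)$ from below through the Cauchy--Schwarz inequality
\begin{equation*}
\Bigl(\sum_{uv \in E} |g(u)^2 - g(v)^2|\Bigr)^{2} \;\leqslant\; \Bigl(\sum_{uv \in E}(g(u)-g(v))^2\Bigr) \cdot \Bigl(\sum_{uv \in E}(g(u)+g(v))^2\Bigr),
\end{equation*}
where the rightmost factor is bounded by $2\sum_{uv \in E}(g(u)^2 + g(v)^2) = 2d\sum_v g(v)^2$ using $d$-regularity. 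For the leftmost factor I invoke the coarea identity $\sum_{uv \in E}|g(u)^2 - g(v)^2| = \int_0^\infty |E(V_t, V_t^c)|\,dt$ with level sets $V_t := \{v : g(v)^2 > t\}$. Each $V_t$ lies inside the support of $g$, hence has size at most $n/2$, so Lemma \ref{Lem-vertex-edge} and the definition of $\mathfrak{h}(\mathbb{G})$ give $|E(V_t, V_t^c)| \geqslant \mathfrak{h}(\mathbb{G}) d |V_t|$. Integrating and using the layer-cake identity $\sum_v g(v)^2 = \int_0^\infty |V_t|\,dt$ yields $\sum_{uv \in E}|g(u)^2 - g(v)^2| \geqslant \mathfrak{h}(\mathbb{G}) d \sum_v g(v)^2$, after which rearranging the Cauchy--Schwarz inequality produces $R(g) \geqslant \mathfrak{h}(\mathbb{G})^2/2$.

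The main obstacle is the lower bound, where two points require care: the truncation $f \mapsto f^+$ must be shown not to increase the Rayleigh quotient (the pointwise monotonicity of $(Lg)(v)$ relative to $(Lf)(v)$ is not an equality, only an inequality, and it is essential that $g$ is supported on at most $n/2$ vertices so that its level sets are admissible in the definition of $\mathfrak{h}(\mathbb{G})$); and the coarea identity must be applied to $g^2$ rather than $g$, so that the homogeneity degrees line up with the quadratic denominator in $R(g)$. The Buser direction is routine by comparison.
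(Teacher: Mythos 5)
Your argument is correct, but it is worth noting that the paper does not actually prove Proposition \ref{chin}: its ``proof'' is a pointer to \cite{myfav65} (prop.\ 4.2.4 and 4.2.5) and \cite{myfav68}, so you have supplied the standard self-contained argument that the paper outsources. Both directions of your proof are the classical ones from those references (and from Alon--Milman \cite{myfav79}): the Buser direction via the mean-zero step function $|V\setminus V_1|\mathbf{1}_{V_1}-|V_1|\mathbf{1}_{V\setminus V_1}$, whose Rayleigh quotient computes exactly to $\frac{|E(V_1,V\setminus V_1)|}{d}\bigl(\frac{1}{|V_1|}+\frac{1}{|V\setminus V_1|}\bigr)\leqslant 2\phi(V_1)$, and the Cheeger direction via truncation to $g=f^{+}$ supported on at most $n/2$ vertices, the pointwise inequality $(Lg)(v)\leqslant \lambda_{2}g(v)$ on the support, Cauchy--Schwarz applied to $\sum_{uv\in E}|g(u)^2-g(v)^2|$, and the coarea formula over the level sets of $g^{2}$. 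You correctly flag the two delicate points (the truncation step and applying the coarea identity to $g^{2}$ rather than $g$). One small inaccuracy: in bounding $|E(V_t,V_t^{c})|\geqslant \mathfrak{h}(\mathbb{G})\,d\,|V_t|$ you invoke Lemma \ref{Lem-vertex-edge}, but that lemma (comparing vertex and edge Cheeger constants) is not needed there --- the bound follows directly from Definition \ref{Edge-Cheeger} since $|V_t|\leqslant n/2$. This does not affect correctness. What your approach buys is a proof that keeps the article self-contained; what the paper's citation buys is brevity, since the inequality is standard and used here only as a black box in the proof of Lemma \ref{lem2}.
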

\begin{proof}
	See \cite{myfav65} prop. $4.2.4$ and prop. $4.2.5$ or \cite{myfav68} sec. $3$.
\end{proof}

Before proceeding further, let us recall the notion of Cayley graph of a group. 
\begin{definition}[Cayley graph]
	Let $G$ be a finite group and $S$ be a symmetric generating set of $G$. Then the Cayley graph $C(G,S)$ is the graph having the elements of $G$ as vertices and $\forall x,y\in G$ there is an edge between $x$ and $y$ if and only if $\exists s\in S$ such that $sx=y$. 
	If $1\in S$, then the graph has a loop (which we treat as an edge) going from $x$ to itself $\forall x \in G$. 
\end{definition}
 A graph is said to be $r$-regular (where $r\geqslant 1$ is an integer) if there are exactly $r$ half edges connected to each vertex (except for a loop which counts as one half edge). If $|S|= d$, it is clear that $C(G,S)$ will be $d$-regular (where $|S|$ denotes the cardinality of the set $S$).\\

Next, we recall the definition of the adjacency matrix associated to any finite undirected graph. For any finite undirected graph $\mathcal{G}$ having vertex set $V = \lbrace v_{1},...,v_{|\mathcal{G}|}\rbrace$ and edge set $E$, the adjacency matrix $T$ is the $|V|\times |V|$ matrix having $T_{ij}=$ the number of edges connecting $v_{i}$ with $v_{j}$. The discrete Cheeger inequality applies to all finite regular graphs (the inequality also holds for finite non-regular graphs where we need to consider the maximum of the degrees of the all the vertices - see \cite{myfav65} prop. $4.2.4$, but for our purposes we shall restrict to regular graphs).\\

We show the following proposition - 
\begin{proposition}[Criteria for non-bipartite property]\label{prop}
	A finite Cayley graph $C(G,S)$ is non-bipartite if and only if there does not exist an index two subgroup $H$ of $G$ which is disjoint from $S$. 
\end{proposition}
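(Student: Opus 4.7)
The plan is to prove both directions by building the dictionary between the $2$-coloring of a bipartite Cayley graph and an index-two subgroup, and vice versa.

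For the easy direction ($\Leftarrow$), assume an index-two subgroup $H\subset G$ disjoint from $S$ exists. I would set $A:=H$ and $B:=G\setminus H$, and check that every edge of $C(G,S)$ crosses the partition. Since $[G:H]=2$, $H$ is normal and $G/H=\{H,sH\}$ for any $s\in S$ (using $S\cap H=\emptyset$), and every element of $G$ squares into $H$; in particular $s^{-1}H=sH$ for all $s\in S$. Then for $x\in H$ one gets $sx\in sH=G\setminus H$, and for $x\notin H$ one has $x\in sH$, whence $sx\in s^2H=H$. So edges always go between $H$ and $G\setminus H$, proving the graph is bipartite.

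For the converse ($\Rightarrow$), I would exploit that $S$ generates $G$ (so $C(G,S)$ is connected) and therefore, if the graph is bipartite, the $2$-coloring is unique up to a swap of colors. Fix the coloring in which the identity $1$ is colored $0$. Define $\chi:G\to\mathbb{Z}/2\mathbb{Z}$ by letting $\chi(g)$ be the color of the vertex $g$. Because the graph is connected, every $g\in G$ can be written as $g=s_k\cdots s_1$ with $s_i\in S$, and the bipartiteness assumption says that the parity $k\bmod 2$ is the same for every such expression; this parity equals $\chi(g)$. The key step is checking that $\chi$ is a group homomorphism: if $g=s_k\cdots s_1$ and $g'=s'_{\ell}\cdots s'_1$ are two such expressions, then $gg'=s_k\cdots s_1 s'_\ell\cdots s'_1$ is an expression of length $k+\ell$, so $\chi(gg')=k+\ell\equiv \chi(g)+\chi(g')\pmod 2$. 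Since every $s\in S$ admits the length-$1$ expression $s=s$, we have $\chi(s)=1$, so $\chi$ is surjective and $H:=\ker\chi$ is an index-two subgroup disjoint from $S$.

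The only real subtlety is that the argument for $(\Rightarrow)$ implicitly uses that $S$ generates $G$, so that the coloring $\chi$ is defined on all of $G$ and the well-definedness (independence of the chosen word in $S$) follows from the bipartite property rather than from a separate consistency check; once that is noted, everything reduces to routine verification. Consequently, bipartite $\iff$ such an $H$ exists, which is the contrapositive of the stated proposition.
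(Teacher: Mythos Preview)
Your proof is correct. Both directions are handled properly, and you rightly flag that connectedness (i.e., that $S$ generates $G$) is what makes the parity map $\chi$ globally defined and well-defined.

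For the nontrivial direction the paper proceeds a little differently in packaging. It fixes the bipartition $G=A\sqcup B$ with $1\in B$, first checks $S\cap B=\emptyset$, and then verifies directly that $B$ is closed under multiplication via a path-parity contradiction: if $x,y\in B$ but $xy\in A$, an odd-length path from $xy$ to $y$ forces $1\in A$. You instead build the word-length parity homomorphism $\chi:G\to\mathbb{Z}/2\mathbb{Z}$ and take $H=\ker\chi$. The underlying mechanism---that in a connected bipartite Cayley graph the parity of any $S$-word representing $g$ depends only on $g$---is identical in both arguments; your homomorphism formulation is slightly cleaner and yields $[G:H]=2$ and $S\cap H=\emptyset$ for free, whereas the paper's hands-on closure check leaves the index-two statement as an (easy) implicit step.
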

\begin{proof}
	 Let $C(G,S)$ be bipartite. Then we can partition the vertex set $G$ into two disjoint sets $A$ and $B$ such that $G = A\sqcup B.$ Let $\mathbf{1} \in B$. Let $s\in S\cap B $. Then $s^{-1}\in S$ and so $1=ss^{-1}\in A$. This is a contradiction. So $S\cap B = \phi$.\\
	 Now suppose $x,y\in B$ but $xy\notin B$. So $xy\in A$. Thus there exists $s_{1},s_{2},\cdots,s_{2r+1} \in S ,r\in \mathbb{N}$ such that $s_{1}s_{2}\cdots s_{2r+1}(xy) = y$. This implies that $s_{1}s_{2}\cdots s_{2r+1}x = 1 \in B$. But this is impossible because $x\in B$ so $s_{1}s_{2}\cdots s_{2r+1}x\in A$. Thus we have a contradiction and $xy \in B$. So, $B$ is an index $2$ subgroup disjoint from $S$. \\
	The other direction is clear.
\end{proof}

\begin{lemma}\label{lem1}
	Let $G$ be a finite group and $C(G,S)$ denote its Cayley graph with respect to a symmetric set $S$ of size $d$. Let $S$ be such that
	  $$ |SA\backslash A| \geqslant \epsilon'|A| \,\text{ ($\epsilon'$-combinatorial expansion of $S$)}$$
	for every set $A\subseteq G$ with $|A| \leqslant \frac{|G|}{2}$ and some $\epsilon' > 0$.
	 Then we have the estimate 
	$$|SA\backslash A|\geqslant \frac{\epsilon'}{d}|G\backslash A|$$
	for all sets $A\subseteq G$ with $|A|\geqslant \frac{|G|}{2}$.
\end{lemma}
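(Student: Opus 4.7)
The plan is to pass to the complement $B := G \setminus A$. Since $|B| \leq |G|/2$, the hypothesis applies to $B$ and gives $|SB \setminus B| \geq \epsilon'|B|$. Because $|B| = |G \setminus A|$, it suffices to prove the ``symmetric'' inequality $|SA \setminus A| \geq \tfrac{1}{d}\,|SB \setminus B|$, and the rest is bookkeeping.

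To obtain this inequality I would count the set of directed edges from $A$ into $B$, namely
$$E(A,B) := \{(v,s) \in A \times S : sv \in B\},$$
in two ways. Because $S = S^{-1}$, the map $(v,s) \mapsto (sv, s^{-1})$ is a bijection between $E(A,B)$ and the analogously defined $E(B,A)$, so the two edge counts coincide. From the $A$-side, a vertex $v \in A$ contributes to $E(A,B)$ precisely when it has a neighbour in $B$; since $S$ is symmetric the set of such $v$ is $A \cap SB$, which equals $SB \setminus B$ because $A = G \setminus B$. Each such vertex contributes at least one pair, giving $|E(A,B)| \geq |SB \setminus B|$. From the $B$-side, only vertices in $B \cap SA = SA \setminus A$ can contribute to $E(B,A)$, and each contributes at most $d$ pairs by $d$-regularity, giving $|E(B,A)| \leq d\,|SA \setminus A|$. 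Combining these with $|E(A,B)| = |E(B,A)|$ yields $|SB \setminus B| \leq d\,|SA \setminus A|$, and chaining with the hypothesis gives the claim.

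The one step that requires attention is pairing the directions of the two edge-count bounds correctly: to \emph{lower}-bound $|SA \setminus A|$ we need a \emph{lower} bound on $|E(A,B)|$ coming from the $A$-side (each of the $|SB \setminus B|$ boundary vertices in $A$ contributes at least one edge) and an \emph{upper} bound on $|E(B,A)|$ coming from the $B$-side (each of the $|SA \setminus A|$ boundary vertices in $B$ contributes at most $d$ edges). Once the sides are matched this way, the symmetry of $S$ and the $d$-regularity of the graph do all the work, and I do not anticipate any further obstacles.
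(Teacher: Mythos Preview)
Your proof is correct and follows essentially the same approach as the paper: both pass to the complement $B=A^{c}$, apply the expansion hypothesis there, and then establish the key inequality $|SB\setminus B|\leqslant d\,|SA\setminus A|$ via the symmetry $S=S^{-1}$. The paper phrases this last step set-theoretically (showing $|sA^{c}\cap A|=|A^{c}\cap s^{-1}A|\leqslant |A^{c}\cap SA|$ and summing over $s\in S$), whereas you phrase it as a double count of the directed edges between $A$ and $B$; these are the same computation in different clothing.
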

\begin{proof} Let $A^{c} = G\backslash A$. The proof is based on the fact that $|SA\backslash A| \geqslant \frac{1}{d}|SA^{c}\backslash A^{c}|$ for all subsets $A\subseteq G$ and $S=S^{-1}\subset G$. \\ Let $s\in S$,
	$$|sA^{c}\cap A| = |s^{-1}(sA^{c}\cap A)| = |A^{c}\cap s^{-1}A| \leqslant |A^{c}\cap SA|$$
	$$\Rightarrow |SA^{c}\backslash A^{c}| = |SA^{c}\cap A| = |\cup_{s\in S}sA^{c}\cap A|\leqslant \Sigma_{s\in S}|SA\cap A^{c}|=d|SA\backslash A|. $$
	Hence, we have $$|SA\backslash A|\geqslant \frac{1}{d}|SA^{c}\backslash A^{c}| \geqslant \frac{\epsilon'}{d}|A^{c}|=\frac{\epsilon'}{d}|G\backslash A|.$$
	(Using the property of combinatorial expansion of $S$ and noting that $|A|\geqslant \frac{|G|}{2} \Rightarrow |A^{c}|\leqslant \frac{|G|}{2}$).
\end{proof}

To prove Theorem \ref{mainthm} we have to show that, under the given assumptions, we have 
$$t_{n}\geqslant -1+ \frac{h(G)^{4}}{\alpha d^{6}(d+1)^{2}},$$ 
for some absolute constant $\alpha$ (which we shall precise).\\

	\textbf{Method of Proof :} The proof is based on the following strategy. We shall first fix a small real number $\zeta$ (depending on the degree $d$ and expansion $\epsilon$) and suppose that the Cayley graph $C(G,S)$ has an eigenvalue less than $-1+\zeta$. Under this condition, we shall obtain a set $A$, such that $|A|$ is close to $\frac{|G|}{2}$ and which satisfies certain properties when we take translates of $A$ by elements $s\in S$. This is Lemma \ref{lem2}. Using this set $A$, we shall construct a subgroup $H$ of $G$ whose index will be $2$ (when $\zeta$ is small enough) and we shall show that this $H$ cannot intersect the generating set $S$ of $G$. This will give the required contradiction with Proposition \ref{prop}, since the Cayley graph $C(G,S)$ was non-bipartite.
\begin{lemma}\label{lem2}
	Let $G$ be a finite group, $k\geqslant 1$ and $S=S^{-1} = \lbrace s_{1},...,s_{d}\rbrace $ be a symmetric generating set of $G$. Let $S$ be $\epsilon$-combinatorially expanding, i.e.,
	$$|SX \backslash X| \geqslant \epsilon|X|$$ 	for every set $X\subseteq G$ with $|X| \leqslant \frac{|G|}{2}$ and some $\epsilon > 0$.\footnote{ It is clear that $d \geqslant \epsilon$ and in fact, considering $X\subset S,|X|\leqslant \frac{|G|}{2}$ we get that $d > \epsilon $, so that $\frac{\epsilon}{d}$ always remains strictly less than $1$ for finite Cayley graphs.} Suppose, there exists a sufficiently small $\zeta, 0<\zeta\leqslant \frac{\epsilon^{2}}{4d^{4}} $, such that the adjacency matrix $T$ of $C(G,S)$ has an eigenvalue in $[-1,-1+\zeta)$. Fix $\beta = d^{2}\sqrt{2\zeta(2-\zeta)} $. 
 Then, there exists a set $A$ with the following properties
	\begin{enumerate}
		\item $(\frac{1}{2 + \beta+ \frac{ d\beta}{\epsilon}})|G| \leqslant |A|\leqslant \frac{1}{2}|G|$,
		\item $|SA\cap A| \leqslant \frac{1}{\epsilon}\beta |A|$,
		\item  $\forall s\in S, g\in G, 	|sAg\Delta(Ag)^{c}|\leqslant \beta\Big(1+ \frac{ d}{\epsilon} +\frac{2}{\epsilon}\Big)|A|.$
	\end{enumerate}
\end{lemma}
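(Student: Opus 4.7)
The plan is to extract $A$ from an eigenvector of $T$ whose eigenvalue lies within $\zeta$ of $-1$: such an eigenfunction forces $C(G,S)$ to be approximately bipartite, and $A$ will be (essentially) one side of the implied near-bipartition.

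First I would fix an eigenvector $f : G \to \mathbb{R}$ with $Tf = t_n f$, $t_n = -1 + \zeta_0 \in [-1,-1+\zeta)$, normalised so that $\sum_x f(x)^2 = |G|$; since $t_n < 1$, $f \perp \mathbf{1}$. The quadratic-form identity
$$\langle (I+T)f, f\rangle = \frac{1}{2d}\sum_{x \in G,\, s \in S}(f(x) + f(sx))^2,$$
together with $\langle (I+T)f, f\rangle = (1+t_n)|G| = \zeta_0|G|$, yields $\sum_{x,s}(f(x) + f(sx))^2 = 2d\zeta_0|G| < 2d\zeta|G|$. From the pointwise inequality $\bigl||a|-|b|\bigr| \leq |a+b|$ one then obtains $\langle L|f|, |f|\rangle \leq \zeta|G|$. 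By Proposition \ref{chin} together with Lemma \ref{Lem-vertex-edge}, the Laplacian spectral gap satisfies $\lambda_2 \geq \mathfrak{h}^2/2 \geq \epsilon^2/(2d^2)$. Writing $|f| = c\mathbf{1} + g$ with $c = \frac{1}{|G|}\sum_x|f(x)|$ and $g \perp \mathbf{1}$, this gives $\|g\|_2^2 \leq 2d^2\zeta|G|/\epsilon^2$, and combined with Cauchy--Schwarz ($c\leq 1$) and the identity $\|g\|_2^2 = (1-c^2)|G|$, the hypothesis $\zeta \leq \epsilon^2/(4d^4)$ forces $c^2 \geq 1/2$. In particular $|f|$ is $L^2$-close to the constant $c \in [1/\sqrt{2},\,1]$.

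Next I would construct $A$. Let $P = \{f > 0\}$ and $N = \{f < 0\}$; by $f \perp \mathbf{1}$ we have $\sum_P|f| = \sum_N|f| = c|G|/2$. Replacing $f$ by $-f$ if necessary, I assume $|P| \leq |N|$ and set $A = P$, so $|A| \leq |G|/2$. For the lower bound in (1), Cauchy--Schwarz applied to $c(|G|/2 - |P|) = \sum_P (c-f)$ gives $\bigl||P| - |G|/2\bigr| \leq \sqrt{|P|}\,\|g\|/c$; solving the resulting quadratic in $\sqrt{|P|}$ produces $|A| \geq |G|/(2+\beta+d\beta/\epsilon)$. For (2), every $y \in SA \cap A$ comes with at least one $s \in S$ with $sy \in A$, i.e.\ $f(y), f(sy) > 0$; I split according to whether $y$ or $sy$ lies in the exceptional set $B := \{|f| < c/2\}$, which has $|B| \leq 4\|g\|^2/c^2$ by Chebyshev. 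Pairs $(y,s)$ with $y, sy \in A \setminus B$ satisfy $(f(y)+f(sy))^2 \geq c^2$, so their count is at most $2d\zeta|G|/c^2$; the remaining $y$ are covered by $A \cap SB$ and $A \cap B$, contributing at most $(d+1)|B|$. Under the hypothesis on $\zeta$ the total is $\leq \frac{\beta}{\epsilon}|A|$, giving (2).

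For (3), the key observation is that for each $g \in G$ the right-translate $h(x) := f(xg^{-1})$ is again a $t_n$-eigenvector of $T$ (since $T$ commutes with right multiplication) and $\{h > 0\} = Pg = Ag$. Applying the preceding paragraph with $h$ in place of $f$ yields $|sAg \cap Ag| \leq \frac{\beta}{\epsilon}|A|$ for every $s \in S$ and $g \in G$. The identity $|sAg \,\Delta\, (Ag)^c| = |G| - 2|A| + 2|sAg \cap Ag|$, combined with the rewriting of (1) as $|G| - 2|A| \leq (\beta + d\beta/\epsilon)|A|$, then produces (3). The main obstacle will be matching the constants in (2) exactly to $\beta = d^2\sqrt{2\zeta(2-\zeta)}$: the characteristic factor $\sqrt{2-\zeta}$ arises from the sharper Cauchy--Schwarz estimate $\sum_{x,s}|f(x)^2 - f(sx)^2| \leq \sqrt{\sum(f+f\circ s)^2}\sqrt{\sum(f-f\circ s)^2} = 2d|G|\sqrt{\zeta_0(2-\zeta_0)}$, so for tight constants (especially at small $d$) one likely needs to replace $A = \{f > 0\}$ by $A = \{f > t\}$ for a carefully chosen threshold $t > 0$ balancing the two error terms, or else apply a coarea argument to $f^2$ using this $L^1$ bound together with the edge expansion of $C(G,S)$.
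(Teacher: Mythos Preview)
Your approach---threshold the eigenvector $f$ at zero and exploit $\langle (I+T)f,f\rangle < \zeta|G|$ directly---is a legitimate route but not the one the paper takes. The paper bypasses all eigenvector analysis by observing that if $T$ has an eigenvalue in $[-1,-1+\zeta)$ then $T^{2}$ has a nontrivial eigenvalue exceeding $(1-\zeta)^{2}$, so the Laplacian $I-T^{2}$ of the Cayley graph on the multi-set $S'=S\cdot S$ satisfies $\mathbf{L}_{2} < 1-(1-\zeta)^{2}=\zeta(2-\zeta)$. Applying Proposition~\ref{chin} and Lemma~\ref{Lem-vertex-edge} to this $d^{2}$-regular graph yields a set $A$ with $|A|\leqslant|G|/2$ and $|S^{2}A\setminus A|<d^{2}\sqrt{2\zeta(2-\zeta)}\,|A|=\beta|A|$ immediately; this is the true origin of the specific form of $\beta$, not a Cauchy--Schwarz identity on $f$, and no threshold tuning or coarea argument is needed. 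Conclusions (1)--(3) then follow from this single inequality together with $\epsilon$-expansion, entirely combinatorially: for (1) one checks $|A\cup SA|\geqslant|G|/2$ (here the hypothesis $\zeta\leqslant\epsilon^{2}/4d^{4}$ enters) and invokes Lemma~\ref{lem1}; for (2) one uses $|S(A\cap SA)\setminus(A\cap SA)|\leqslant|S^{2}A\setminus A|$ and applies $\epsilon$-expansion to $A\cap SA$; for (3) the identity $|sAg\,\Delta\,(Ag)^{c}|=|G|-|sA\,\Delta\,A|$ together with $|sA\cap A|\leqslant|SA\cap A|$ suffices.

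Your route might in principle give sharper $\zeta$-dependence (your bound on $|SA\cap A|$ is essentially linear in $\zeta$ rather than $\sqrt{\zeta}$), but at the cost of juggling the exceptional set $B$, the Chebyshev estimate, and the parameter $c$, and---as you yourself note---you do not recover the stated $\beta$ without further work. The paper's reduction to $T^{2}$ plus off-the-shelf Cheeger is much shorter and produces the exact constants of the statement automatically.
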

\begin{proof}
		We have 
	
	\begin{equation}\label{eqn3.1}
	\epsilon|X|\leqslant |SX\backslash X|,
	\end{equation}
	whenever $X\subset G$ with $|X|\leqslant \frac{|G|}{2}$ and using Lemma \ref{lem1} with $|S| = d$
	
	\begin{equation}\label{eqn3.2}
	\frac{\epsilon}{d}|G\backslash X|\leqslant |SX\backslash X| ,
	\end{equation}
	whenever $|X| \geqslant \frac{|G|}{2}$.\\

	Since $T$ has an eigen-value in $[-1,-1+\zeta)$, $T^{2}$ has a non-trivial eigenvalue (say) $t'$ in $((1-\zeta)^{2},1]$.\footnote{actually we only need the fact that $t' > (1-\zeta)^{2}$. That $t' \neq 1$ follows when we consider non-bipartite graphs, since a graph is bipartite iff $T$ has $-1$ as an eigenvalue. }\\
	
		Now consider the set $S^{2}$ (obtained by identifying all equal elements in the multi-set $S.S$) and the muti-set $S' = S.S$ (without identification). $T^{2}$ is the adjacency matrix associated with $S'$ and $|S^{2}|\leqslant |S'|=d^{2}$. Let $h(G,S')$ denote the vertex Cheeger constant (Definition \ref{vertex-Cheeger}) and $\mathfrak{h}(G,S')$ denote the edge-Cheeger constant (Definition \ref{Edge-Cheeger}) for $G$ with respect to the multi-set $S'$.

	We have $t'> (1-\zeta)^{2}$. Let $\mathbb{L}$ denote the Laplacian matrix of the graph of $G$ with respect to $S'$, with the adjacency operator $T^{2}$ and let its eigenvalues be denoted by $0=\mathbf{L}_{1} \leqslant \mathbf{L}_{2}\leqslant ... \leqslant \mathbf{L}_{n} \leqslant 2$. We know that 
	$$\mathbf{L}_{2} = 1 - t' < 1 - (1-\zeta)^{2} = \zeta(2-\zeta).$$

	By the discrete Cheeger-Buser inequality (Proposition \ref{chin}) for the graph of $G$ with respect to $S'$ we have 
	
	$$\frac{\mathfrak{h}^{2}(G,S')}{2}\leqslant \mathbf{L}_{2}< \zeta(2-\zeta).$$ 
	Hence by Lemma \ref{Lem-vertex-edge},
	
	$$\frac{h(G,S')}{d^{2}} \leqslant   \mathfrak{h}(G,S') < \sqrt{2\zeta(2-\zeta)}.$$ 
	
	This implies that $\exists A \subset G$ with $ |A| \leqslant \frac{|G|}{2}$ such that
	\begin{equation}\label{eqn3.3}
	\frac{|S^{2}A\backslash A|}{|A|} \leqslant \frac{|S'A\backslash A|}{|A|} < d^{2}\sqrt{2\zeta(2-\zeta)} = \beta.
	\end{equation} 
	\begin{claim}
		$|A\cup SA| \geqslant \frac{|G|}{2}$ for $\zeta \leqslant \frac{\epsilon^{2}}{4d^{4}}$.
	\end{claim}
\begin{proof}[Proof of claim]
		We know that for arbitrary sets $X,Y,Z \subset G$, $X(Y\cup Z) = XY\cup XZ$. Hence 
	
	$$|S(A\cup SA)\backslash (A\cup SA)| = |S^{2}A\backslash A| <   d^{2}\sqrt{2\zeta(2-\zeta)}|A|.$$

	Let $|A\cup SA| \leqslant \frac{|G|}{2}$. This implies (using equation \ref{eqn3.1} and \ref{eqn3.3}) that $$\epsilon|A| \leqslant \epsilon|A\cup SA|\leqslant|S(A\cup SA)\backslash (A\cup SA)|<  d^{2}\sqrt{2\zeta(2-\zeta)}|A|,$$
	which cannot hold for $\zeta \leqslant \frac{\epsilon^{2}}{4d^{4}}$.

\end{proof}
	This means that under the assumption $\mathbf{\zeta \leqslant \frac{\epsilon^{2}}{4d^{4}}}$ we have $|A\cup SA| \geqslant \frac{|G|}{2}$.\\
	
	We can apply Lemma \ref{lem1} to $|A\cup SA|\geqslant \frac{|G|}{2}$  and use equation \ref{eqn3.2} and equation \ref{eqn3.3} to get
	
		$$\frac{\epsilon}{d}|G\backslash (A\cup SA)|\leqslant |S(A\cup SA)\backslash (A\cup SA)|= |S^{2}A\backslash A| <\beta|A|.$$
	Noting the fact that $|G\backslash (A\cup SA)| = |G|-|A\cup SA|$, we have

	$$ |G| - \frac{ d\beta}{\epsilon}|A| \leqslant |A\cup SA|\leqslant |A| + |SA|.$$

	We use the fact that, $$|SA|\leqslant |S^{2}A|\leqslant |A|+ \beta|A|$$
	to conclude that, 
	\begin{equation}\label{eqn3.4}
	\mathbf{ \left(\frac{1}{2 + \beta+ \frac{ d\beta}{\epsilon}}\right)|G|\leqslant |A|}.
	\end{equation}
	
	For arbitrary sets $X,Y,Z \subset G$ we have $X(Y\cap Z) \subset XY\cap XZ.$ \\
	
	Hence 		
	$$|S(A\cap SA)\backslash(A \cap SA)| \leqslant |S^{2}A\backslash A| \leqslant  \beta|A|.$$
	
	As $|A|\leqslant \frac{|G|}{2}$ clearly $|A\cap SA| \leqslant \frac{|G|}{2}$. So, the hypothesis of $\epsilon$-combinatorial expansion applies to $A\cap SA$ (i.e., $\epsilon|A\cap SA|\leqslant |S(A\cap SA)\backslash(A \cap SA)| \leqslant \beta|A|$) and we have
	\begin{equation}\label{eqn3.5}
	\mathbf{|A\cap SA| \leqslant \frac{1}{\epsilon}\beta |A|.}
	\end{equation}
	Our next aim is to compute the bounds on $|sA\Delta A|, |sA\Delta A^{c}|, |sAg\Delta Ag|, |sAg\Delta(Ag)^{c}|$ for $g\in G$.

	For this,
	\begin{align*}
	|sA\Delta A| &= |sA\cup A\backslash sA\cap A| \\
	& = |sA\cup A|- |sA\cap A| \\
	& = |sA|+|A| - 2|sA\cap A| \\
	& = 2|A| - 2|sA\cap A| \\
	& \geqslant 2|A| - 2|SA\cap A|\\
	& \geqslant \left(2-\frac{2\beta}{\epsilon}\right)|A|.
	\end{align*}

	This implies, 
	\begin{align*}
			|sA\Delta A^{c}| & = |G\setminus (sA\Delta A) | \\
			& = |G| - |sA\Delta A|\\
			& \leqslant |G| - (2-\frac{2}{\epsilon}\beta)|A|\\
			& \leqslant \Big(\beta+ \frac{ d\beta}{\epsilon} +\frac{2}{\epsilon}\beta\Big)|A|.
	\end{align*}

	Thus we have \\
	\begin{equation}
	|sA\Delta A^{c}| \leqslant \beta\left(1+ \frac{ d}{\epsilon} +\frac{2}{\epsilon} \right)|A|
	\end{equation}
	and by the symmetricity of $S$,
	$$|sA^{c}\Delta A| \leqslant \beta\left(1+ \frac{ d}{\epsilon} +\frac{2}{\epsilon} \right)|A|.$$

	Now let $g\in G$ be arbitrary. Then we have, 
	\begin{align*}
			|sAg\Delta Ag| & = |sAg|+|Ag| - 2|sAg\cap Ag| \\
			& = 2|A| -2|sA\cap A| \\
			& \geqslant \left(2-\frac{2}{\epsilon}\beta\right)|A|.
	\end{align*}
	(Since for fixed $g\in G$, $X,Y\subset G$, $(X\cap Y)g = Xg\cap Yg$).\\
	
	Similarly, we get	
	\begin{equation}
	\mathbf{|sAg\Delta(Ag)^{c}|\leqslant \beta\left(1+ \frac{ d}{\epsilon} +\frac{2}{\epsilon}\right)|A|}
	\end{equation}
	and 
	$$|s(Ag)^{c}\Delta Ag|\leqslant \beta\left(1+ \frac{ d}{\epsilon} +\frac{2}{\epsilon}\right)|A|.$$

\end{proof}

	We shall now use the set $A$ which we obtained from the lemma to prove our main theorem.
	

\begin{theorem}\label{mainthm2}
	Let $G$ be a finite group, $k\geqslant 1$ and $S=S^{-1} = \lbrace s_{1},...,s_{d}\rbrace $ be a symmetric generating set of $G$. Suppose that $G$ does not have an index two subgroup $H$ disjoint from $S$. Let $S$ be $\epsilon$-combinatorially expanding, i.e.,
	$$|SX \backslash X| \geqslant \epsilon|X|$$ 	for every set $X\subseteq G$ with $|X| \leqslant \frac{|G|}{2}$ and some $\epsilon > 0$. 
	Then all the eigenvalues of the operator $T$ are $ \geqslant -1 +\frac{\epsilon^{4}}{\alpha d^{6}(d+1)^{2}}$ where $\alpha$ is an absolute constant (we can take $\alpha = 2^{9} $).
\end{theorem}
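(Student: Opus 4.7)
I argue by contradiction. Set $\zeta := \epsilon^{4}/(\alpha d^{6}(d+1)^{2})$ with $\alpha = 2^{9}$ and suppose $T$ has an eigenvalue in $[-1, -1+\zeta)$. A direct calculation confirms the hypothesis $\zeta \leqslant \epsilon^{2}/(4d^{4})$ of Lemma \ref{lem2}, which then produces a set $A\subseteq G$ with $|A|$ of order $|G|/2$, with $|SA\cap A|\leqslant \beta|A|/\epsilon$, and with the flip bound $|sAg\triangle (Ag)^{c}|\leqslant \eta$ uniformly in $s\in S$, $g\in G$, where $\beta = d^{2}\sqrt{2\zeta(2-\zeta)}$ and $\eta := \beta(1+d/\epsilon+2/\epsilon)|A|$; by right-translation invariance this is the same as $|sA\triangle A^{c}|\leqslant \eta$ for every $s \in S$. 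The plan is to extract from $A$ an index-two subgroup $H\leqslant G$ with $\langle S^{2}\rangle \subseteq H$. Since non-bipartiteness together with $S$ generating $G$ forces $\langle S^{2}\rangle = G$ (an odd relation $s_{1}\cdots s_{k} = e$ expresses every $s\in S$ as the even-length word $s \cdot s_{1}\cdots s_{k}$), this yields $H = G$, contradicting $[G:H]=2$.

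The central step is a dichotomy for right translates: for every $g\in G$, the quantity $|Ag\triangle A|$ is either at most $d\eta/\epsilon$ or at least $|G|-d\eta/\epsilon$. Set $B_{g}:=A \triangle Ag$ and $C_{s}:=sA\triangle A$. Distributing left multiplication by $s$ over $\triangle$ and using right-translation invariance of $\triangle$ yields the identity
$$sB_{g}\triangle B_{g} \;=\; C_{s}\triangle C_{s}g \qquad (s\in S).$$
Since $|sA\triangle A^{c}|\leqslant \eta$ forces $|C_{s}^{c}|\leqslant \eta$, we conclude $|C_{s}\triangle C_{s}g|\leqslant 2|C_{s}^{c}|\leqslant 2\eta$, and hence every $B_{g}$ is $2\eta$-approximately left-$S$-invariant. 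Combining the standard identity $\sum_{s\in S}|sB_{g}\triangle B_{g}| = 2|E(B_{g},B_{g}^{c})|$ with the edge-Cheeger bound $|E(B_{g},B_{g}^{c})|\geqslant \epsilon\min(|B_{g}|,|G|-|B_{g}|)$, which follows from Lemma \ref{Lem-vertex-edge} and the assumption $h(G)\geqslant \epsilon$, forces $\min(|B_{g}|,|G|-|B_{g}|)\leqslant d\eta/\epsilon$.

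With the dichotomy in hand, define $H := \{g\in G : |Ag\triangle A|\leqslant d\eta/\epsilon\}$. The triangle inequality together with the dichotomy gap shows $H$ is closed under products and inverses. A counting argument exploiting $\sum_{g\in G}|Ag\triangle A| = 2|A||A^{c}|$ rules out $H = G$, and a two-coset argument (if $g_{1}, g_{2}\in G\setminus H$ belonged to distinct right cosets of $H$, then the bound $|Ag_{1}\triangle Ag_{2}|\leqslant 2d\eta/\epsilon$ would coexist with $|A\triangle Ag_{2}g_{1}^{-1}|\geqslant |G|-d\eta/\epsilon$, impossible once $3d\eta/\epsilon < |G|$) gives $[G:H]=2$. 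Next, $s^{2}\in H$ for every $s\in S$: whether $s$ lies in the small or large branch, the identity $A^{c}s = (As)^{c}$ combined with the flip bound delivers $|As^{2}\triangle A|\leqslant 2d\eta/\epsilon$, placing $s^{2}$ in the small branch by the dichotomy gap. Hence $\langle S^{2}\rangle\subseteq H$, which combined with $\langle S^{2}\rangle = G$ forces $H = G$, a contradiction. The principal obstacle is calibrating constants to ensure that $3d\eta/\epsilon < |G|$ and the dichotomy gap are strict simultaneously; the choice $\alpha = 2^{9}$ is precisely what makes these threshold comparisons hold uniformly in $d$ and $\epsilon$.
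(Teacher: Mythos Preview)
Your overall architecture matches the paper's: obtain the set $A$ from Lemma~\ref{lem2}, prove a dichotomy for $|A\triangle Ag|$ (the paper phrases it in terms of $|A\cap Ag|$, which is equivalent), use a Freiman-type argument to show the ``close'' translates form an index-$2$ subgroup $H$, and then contradict non-bipartiteness. Your derivation of the dichotomy via the identity $sB_{g}\triangle B_{g}=C_{s}\triangle C_{s}g$ is a clean variant of the paper's computation of $|SB\triangle B|$.

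There is, however, a genuine gap in your final step. You prove $s^{2}\in H$ for each $s\in S$ and then assert $\langle S^{2}\rangle\subseteq H$. But $S^{2}=\{st:s,t\in S\}$, and once $[G:H]=2$ the conclusion $s^{2}\in H$ is automatic for \emph{every} $s\in G$; it carries no information about whether a mixed product $st$ lies in $H$. Concretely, your own case analysis shows that if $s$ lies in the small branch and $t$ in the large branch then $st$ lands in the large branch, i.e.\ $st\notin H$. So what is actually needed is that all of $S$ lies in a single $H$-coset. Your argument never establishes this, and the flip bound you invoke (which controls \emph{left} translates $sA$) does not speak to membership in your $H$, which is defined via \emph{right} translates $Ag$.

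The paper closes this step differently: it uses property~(2) of Lemma~\ref{lem2}, namely $|SA\cap A|\leqslant (\beta/\epsilon)|A|$, to argue directly that $S\cap H=\varnothing$. You never invoke property~(2), and it is exactly the missing ingredient. Once $S\cap H=\varnothing$ one contradicts the hypothesis immediately (or, in your language, $S$ lies entirely in the nontrivial coset so $S^{2}\subseteq H$ and then $\langle S^{2}\rangle=G$ gives the contradiction). Note that making this work cleanly also requires care with the left/right distinction between the bound on $|sA\cap A|$ and the definition of $H$ via $Ag$; the simplest repair is to run the whole dichotomy and the definition of $H$ with left translates $gA$ throughout, so that property~(2) applies verbatim.
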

\begin{proof}
	The proof will be by contradiction. Keeping the notations of Lemma \ref{lem2}, we shall show that if $T$ has an eigenvalue in $[-1,-1+\zeta)$, where $\zeta$  is chosen to be small (precised in Claim \ref{clmthm} and satisfying the condition on $\zeta$ in Lemma \ref{lem2}), there exists an index $2$ subgroup, $H$ in $G$ which is disjoint from $S$. This will give the required contradiction.\\
	
	First we use Lemma \ref{lem2} to conclude that (under the assumption $\zeta\leqslant \frac{\epsilon^{2}}{4d^{4}}$) there exists a set $A$ with the following properties
	\begin{enumerate}
		\item $(\frac{1}{2 + \beta+ \frac{ d\beta}{\epsilon}})|G| \leqslant |A|\leqslant \frac{1}{2}|G|,$
		\item $|SA\cap A| \leqslant \frac{1}{\epsilon}\beta |A|$,
		\item  $\forall s\in S, |sA\Delta A^{c}| \leqslant \Big(\beta+ \frac{ d\beta}{\epsilon} +\frac{2}{\epsilon}\beta\Big)|A|, |sA^{c}\Delta A| \leqslant \Big(\beta+ \frac{ d\beta}{\epsilon} +\frac{2}{\epsilon}\beta\Big)|A|$,
		\item  $\forall s\in S, g\in G, 	|sAg\Delta(Ag)^{c}|\leqslant \beta\Big(1+ \frac{ d}{\epsilon} +\frac{2}{\epsilon}\Big)|A|, |s(Ag)^{c}\Delta Ag|\leqslant \beta\Big(1+ \frac{ d}{\epsilon} +\frac{2}{\epsilon}\Big)|A|.$\\
	\end{enumerate}

		Using the above set $A$, we want to construct the subgroup $H$ of index $2$. The method will be to translate $A$ using the elements $g\in G$ and check which of them have large intersection with the original set $A$ (i.e, $|A\cap Ag|$ is ``almost" $|A|$).\\

		Take $A_{g} := A\cap Ag, A_{g}' := (A\cup Ag)^{c}$. 	Let $B = A_{g}\sqcup A_{g}'$ (it is a disjoint union). Then 
	
			$$G\setminus B  = B^{c} = A\Delta Ag$$
			and 
		$$ 	B  = (A\Delta Ag)^{c} = A\Delta (Ag)^{c}.$$

		 Also note that $X\Delta Y  = X^{c}\Delta Y^{c}$ for all $X,Y \subseteq G$.\\

	We wish to estimate $|B|$ when $g\in G$. For this, we first estimate $|SB\Delta B|$ and $|SB^{c}\Delta B^{c}|$.
	\begin{align*}
	|SB\Delta B| &\leqslant \Sigma_{s\in S}|sB\Delta B|  \\
	& = \Sigma_{s\in S}|s(A\Delta (Ag)^{c})\Delta(A\Delta (Ag)^{c})|\\
	& = \Sigma_{s\in S}|(sA\Delta s(Ag)^{c})\Delta (A\Delta (Ag)^{c})|\\
	& = \Sigma_{s\in S}|(sA\Delta s(Ag)^{c})\Delta(A^{c}\Delta Ag)|\\
	& =  \Sigma_{s\in S}|(sA\Delta A^{c})\Delta(s(Ag)^{c}\Delta (Ag))|\\
	& \leqslant d(|sA\Delta A^{c}| + |sAg\Delta(Ag)^{c}|)\\
	& \leqslant 2d\beta\Big(1+ \frac{ d}{\epsilon} +\frac{2}{\epsilon}\Big)|A|.
	\end{align*}
	(where we use the fact that, all the above sets are defined inside $G$, $X\Delta Y = X^{c}\Delta Y^{c}$, $sX^{c} = (sX)^{c}$, $s(X\Delta Y) = (sX\Delta sY)$ \footnote{These do not hold for sets $S\subset G$ in general, i.e., $S.X^{c}\neq (SX)^{c}$ and $SX\Delta SY \subset S(X\Delta Y)$ for arbitrary sets $S,X,Y\subset G$. This is one of the main reasons why we had to estimate translates of $A$ by elements $s \in S$ rather than translate of $A$ by $S$. } and $\Delta$ is both associative and commutative).
	
	Similarly,
	\begin{align*}
			|SB^{c}\Delta B^{c}| & \leqslant \Sigma_{s\in S}|sB^{c}\Delta B^{c}| \\
			& = \Sigma_{s\in S}|s(A\Delta Ag)\Delta (A\Delta Ag)| \\
			& = \Sigma_{s\in S}|(sA\Delta sAg)\Delta (A\Delta Ag)|\\
			& = \Sigma_{s\in S}|(sA\Delta sAg)\Delta (A^{c}\Delta (Ag)^{c})|\\
			& = \Sigma_{s\in S}|(sA\Delta A^{c})\Delta (sAg\Delta (Ag)^{c})|\\
			 & \leqslant  d(|sA\Delta A^{c}| + |sAg\Delta(Ag)^{c}|)\\
			 & \leqslant 2d\beta\Big(1+ \frac{ d}{\epsilon} +\frac{2}{\epsilon}\Big)|A|.
	\end{align*}

	We now have the following two cases depending on the size of the set $B$.
	\begin{enumerate}
		\item $|B|\leqslant \frac{|G|}{2}$ in which case,
		
		\begin{equation}\label{eqn3.11}
			|B| \leqslant \frac{2d\beta}{\epsilon^{2}}\Big(\epsilon+  d +2\Big)|A|  
		\end{equation}
		(using the fact that $\epsilon|B| \leqslant |SB\backslash B| \leqslant |SB\Delta B|\leqslant   2d\beta\Big(1+ \frac{ d}{\epsilon} +\frac{2}{\epsilon}\Big)|A| $).\\
			From this it follows that, 
		\begin{equation}
		|A\cap Ag| \leqslant  \frac{d\beta}{\epsilon^{2}}\Big(\epsilon+  d +2\Big)|A|. 
		\end{equation}
		(There are two ways to see it. $B = A_{g}\sqcup A_{g}'$ and $|A_{g}'| \geqslant |A_{g}|$ when $|A|\leqslant \frac{|G|}{2} \Rightarrow |A\cap Ag| = |A_{g}|\leqslant \frac{|B|}{2}$, is one way. The other way is to use $G\setminus B = A\Delta Ag$. Hence after taking the cardinalities and expanding we have $|B| = |G|-2|A| + 2|A\cap Ag|$. Then use the fact that $2|A|\leqslant |G|$, to get that $|A\cap Ag|\leqslant \frac{|B|}{2}$.)\\
		
		 \textbf{OR}\\
		\item $|B|> \frac{|G|}{2}$ in which case $|B^{c}|\leqslant\frac{|G|}{2}$ and then 
		\begin{equation} \label{eqn3.12}
		|G\backslash B|\leqslant \frac{2d\beta}{\epsilon^{2}}\Big(\epsilon+  d +2\Big)|A|
		\end{equation}
		(using the fact that $\epsilon|B^{c}| \leqslant |SB^{c}\backslash B^{c}| \leqslant |SB^{c}\Delta B^{c}| \leqslant  2d\beta\Big(1+ \frac{ d}{\epsilon} +\frac{2}{\epsilon}\Big)|A| $).
		\\
		From this it follows that, 
		\begin{equation}
		\Big( 1 -\frac{d\beta}{\epsilon^{2}}\Big(\epsilon+  d +2\Big)\Big)|A|\leqslant |A\cap Ag|.
		\end{equation}
		(Again, using  $G\setminus B = A\Delta Ag$, taking the cardinalities and expanding the expression we have $|G\setminus B| = 2|A| - 2|A\cap Ag|$).\\
	\end{enumerate}

    Thus for any $g\in G$, we have either
    \begin{enumerate}\label{enum}    
    	\item [(i)] $|A\cap Ag| \leqslant \frac{d\beta}{\epsilon^{2}}\Big(\epsilon+  d +2\Big)|A|,$ \\
    		
    	\textbf{OR}\\
    	
    	\item [(ii)] $|A\cap Ag| \geqslant \Big( 1 -\frac{d\beta}{\epsilon^{2}}\Big(\epsilon+  d +2\Big)\Big)|A|$.\\
    \end{enumerate}

   The trick now is to use the method of Freiman in \cite{myfav67} to find a subgroup $H$ of $G$. We prove it in the following claim.
   \begin{claim}\label{clmthm}
   	If $H:= \lbrace g\in G : |A\cap Ag|\geqslant r|A| \rbrace $ 
   	where $r =1 -\frac{d\beta}{\epsilon^{2}}\Big(\epsilon+  d +2\Big)$ and $\beta \leqslant \frac{1}{2^{3}\sqrt{2}}\times \frac{\epsilon^{2}}{d(d+1)} $, then $H$ is a subgroup of $G$ of index $2$.
   \end{claim}
\begin{proof}[Proof of claim]
	 We have $H=H^{-1}$, $1\in H$ and $r> \frac{1}{2} + \frac{d\beta}{\epsilon^{2}}\Big(\epsilon+  d +2\Big)$. Also for $g,h \in H$ we have by the triangle inequality
	 \begin{align*}
	 		|A\backslash Agh|& \leqslant |A\backslash Ah| + |Ah\backslash Agh|\\
	 		& \leqslant 2(1-r)|A|.  		
	 \end{align*}
	 This implies, 
	 	 		 $$ |A\cap Agh| \geqslant (2r - 1)|A|.$$

	Hence, $gh$ cannot belong to case (i), $gh$ belongs to case (ii), i.e., $gh\in H$. So $H$ is a subgroup of $G$. 	
	
	Let $z= \frac{d\beta}{\epsilon^{2}}\Big(\epsilon+  d +2\Big)$. Using the estimate,
	\begin{align*}
			|A|^{2} & = \Sigma_{g\in G}|A\cap Ag| \\
			& \leqslant |H||A| + \frac{d\beta}{\epsilon^{2}}\Big(\epsilon+  d +2\Big)|A||G\backslash H|, 
	\end{align*}	
	we have 
	\begin{align*}
			& |A| \leqslant |H| + z(|G|- |H|), \\
	\end{align*}
	which implies that,
				 $$\left(\frac{1}{2 + \beta+ \frac{ d\beta}{\epsilon}}\right)|G| - z|G|  \leqslant (1-z)|H|.$$
	(Using the fact that $ \left(\frac{1}{2 + \beta+ \frac{ d\beta}{\epsilon}}\right)|G|\leqslant |A|$). \\
	
	The index of $H$ in $G$ is $2$ if $|H|> \frac{|G|}{3}$ and thus, to conclude that $H$ is a subgroup of $G$ of index $2$, it suffices to show that \footnote{Note that $H \neq G$ since there are elements $g\in G$ such that, $g\in G\setminus H$. }
	\begin{align*}
		\left(\frac{1}{2 + \beta+ \frac{ d\beta}{\epsilon}} - z \right) & >\frac{1-z}{3} \\
		\Leftrightarrow \,\,\, \frac{1}{\left(2 + \beta+ \frac{ d\beta}{\epsilon} \right)} & > \frac{1+2z}{3}.
	\end{align*}

	Substituting the expression for $z$, it suffices to show that,
	$$\left(2 + \beta+ \frac{ d\beta}{\epsilon}\right)  + \frac{2d\beta}{\epsilon^{2}}\left(\epsilon+  d +2\right)\left(2 + \beta+ \frac{ d\beta}{\epsilon}\right) < 3,$$
	i.e., 
	$$\left(\beta+ \frac{ d\beta}{\epsilon}\right)  + \frac{2d\beta}{\epsilon^{2}}\left(\epsilon+  d +2\right)\left(2 + \beta+ \frac{ d\beta}{\epsilon} \right) < 1.$$
	Now, using the fact that $\beta < \frac{1}{8\sqrt{2}}, \frac{d\beta}{\epsilon} < \frac{1}{8\sqrt{2}}, \frac{2d\beta}{\epsilon^{2}} < \frac{1}{4\sqrt{2}(d+1)}, \epsilon < d,$ $\frac{1}{4\sqrt{2}} < 0.177$, we have,
	\begin{align*}
			& \left(\beta+ \frac{ d\beta}{\epsilon}\right)  + \frac{2d\beta}{\epsilon^{2}}\Big(\epsilon+  d +2\Big)\left(2 + \beta+ \frac{ d\beta}{\epsilon}\right) \\
			& < \left( \frac{1}{8\sqrt{2}} + \frac{1}{8\sqrt{2}}\right) + \frac{1}{4\sqrt{2}(d+1)}(2d+2)\left(2 +  \frac{1}{8\sqrt{2}} + \frac{1}{8\sqrt{2}}\right)\\
			& = \frac{1}{4\sqrt{2}} + \frac{d+1}{d+1}.\frac{2+\frac{1}{4\sqrt{2}}}{2\sqrt{2}} \\
			& < 0.177 + 0.77 \\
			& < 1.	
	\end{align*}	
	Hence the index of $H$ in $G$ is $2$ if $d^{2}\sqrt{2\zeta(2-\zeta)} = \beta \leqslant \frac{\epsilon^{2}}{2^{3}\sqrt{2}.d(d+1)}$. This gives us the fact that, for all $\zeta \leqslant \frac{\epsilon^{4}}{2^{9}d^{6}(d+1)^{2}}$, the index of $H$ in $G$ is $2$.
\end{proof}

	Up until now, we have argued formally that under the condition on $\zeta$ (equivalently $\beta$) being small enough for the set $A$ to exist (essentially $\beta < \epsilon$ or $\zeta \leqslant \frac{\epsilon^{2}}{4d^{4}}$). From the above claim, we see that all $\zeta \leqslant \frac{\epsilon^{4}}{2^{9}d^{6}(d+1)^{2}}$ satisfies this condition (since $\frac{\epsilon}{d} < 1 $). From now on fix $\zeta (> 0)$ to be any real number $\leqslant \frac{\epsilon^{4}}{2^{9}d^{6}(d+1)^{2}}$.\\

	We have found an index two subgroup $H$ in $G$. We shall now show that this subgroup $H$ is disjoint from $S$.\\
	
	Suppose, on the contrary that $t\in S\cap H$. This means the following
	\begin{itemize}
		\item $t\in S$. Therefore, $|tA\cap A|\leqslant |SA\cap A| \leqslant \frac{\beta}{\epsilon}|A|$ (see Lemma \ref{lem2}).
		\item $t\in H$. Therefore, by definition of $H$, $|tA\cap A|\geqslant r|A|$, where $r =1 -\frac{d\beta}{\epsilon^{2}}\Big(\epsilon+  d +2\Big)$. 
		
	\end{itemize} 
	   Combining, we see that $r\leqslant \frac{\beta}{\epsilon}$. This is clearly a contradiction since $0.82 < (1-\frac{1}{4\sqrt{2}})\leqslant r $ and $\frac{\beta}{\epsilon} < \frac{1}{8\sqrt{2}} < 0.09$.
  
   This implies that $S\subset G\backslash H$, contradicting the hypothesis.\\
   
    To summarise, we have shown that - for any fixed $\zeta \leqslant \frac{\epsilon^{4}}{2^{9}d^{6}(d+1)^{2}}$, if there exists an eigenvalue of the normalised adjacency matrix of $C(G,S)$ less than $-1+\zeta$, then $C(G,S)$ must be bipartite (equivalently it has an index $2$ subgroup disjoint from $S$). That means, for non-bipartite $C(G,S)$, we must have all eigenvalues of the normalised adjacency matrix $\geqslant -1 +\frac{\epsilon^{4}}{\alpha d^{6}(d+1)^{2}}$ with 
   $\alpha = 2^{9}$. We are done.

\end{proof}

Since, by definition, the vertex Cheeger constant $h(G)$ is the infimum of $\frac{|SX\setminus X|}{|X|}$, we can replace $\epsilon$ by $h(G)$ in the above arguments, thus proving Theorem \ref{mainthm}.

\section{Concluding Remarks}

The above bound is dependent on the Cayley graph structure and does not hold for general non-bipartite finite, regular graphs. In the setting of arbitrary finite regular graphs some recent works are worth mentioning. Bauer and Jost in \cite{myfav69} introduced a dual Cheeger constant $\bar{h}$ which encodes the bipartiteness property of finite regular graphs. The dual Cheeger constant $\bar{h}$ of a $d$ regular graph is defined as 
$$ \bar{h} := \max_{V_{1},V_{2},V_{1}\cup V_{2}\neq \phi} \frac{2|E(V_{1},V_{2})|}{vol(V_{1})+vol(V_{2})},$$ for a partition $V_{1},V_{2},V_{3}$ of the vertex set $V$, $vol(V_{k}) = d|V_{k}|$ and $|E(V_{1},V_{2})|$ denotes the number of edges going from $V_{1}$ into $V_{2}$. For a general regular graph it was shown by Bauer-Jost (and independently by Trevisan \cite{myfav70}) that 

\begin{theorem}[Bauer-Jost \cite{myfav69}]
	Let $\lambda_{n}$ be the largest eigen-value of the graph laplace operator. Then $\lambda_{n}$ satisfies  
	$$\frac{(1-\bar{h})^{2}}{2}\leqslant 2-\lambda_{n} \leqslant 2(1-\bar{h})$$
	and the graph is bipartite if and only if $\bar{h} = 1$.
\end{theorem}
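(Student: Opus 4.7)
The plan is to exploit the variational characterization
$$2-\lambda_n \;=\; \min_{f\neq 0}\frac{f^T(I+T)f}{f^Tf} \;=\; \min_{f\neq 0}\frac{\sum_{\{x,y\}\in E}(f(x)+f(y))^2}{d\sum_{x\in V}f(x)^2},$$
valid in the $d$-regular case since $\sum_{\{x,y\}\in E}(f(x)+f(y))^2 = d\,f^T(I+T)f$. This is the ``flip-the-sign'' analogue of the Rayleigh quotient governing $\lambda_2$, and the entire proof will be a sign-flipped version of Cheeger--Buser (Proposition \ref{chin}), with $\mathfrak{h}$ replaced by $1-\bar h$.

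For the upper bound $2-\lambda_n \leq 2(1-\bar h)$, I would take a partition $V = V_1 \sqcup V_2 \sqcup V_3$ attaining (or nearly attaining) the maximum defining $\bar h$ and test against $f = \mathbf{1}_{V_1} - \mathbf{1}_{V_2}$. An edge contributes $4$ if both endpoints lie in $V_1$ (resp.\ in $V_2$), $0$ if it runs between $V_1$ and $V_2$, $1$ if exactly one endpoint lies in $V_3$ (the other in $V_1$ or $V_2$), and $0$ if both lie in $V_3$; combining with the identity $\operatorname{vol}(V_k) = 2|E(V_k,V_k)| + \sum_{j\neq k}|E(V_k,V_j)|$ reduces the Rayleigh quotient to
$$R(f) = 2 - \frac{4|E(V_1,V_2)| + |E(V_1,V_3)| + |E(V_2,V_3)|}{\operatorname{vol}(V_1)+\operatorname{vol}(V_2)} \leq 2 - \frac{4|E(V_1,V_2)|}{\operatorname{vol}(V_1)+\operatorname{vol}(V_2)} = 2(1-\bar h).$$

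For the lower bound $(1-\bar h)^2/2 \leq 2-\lambda_n$ I would run a dual Cheeger sweep. Let $\phi$ be a unit $L$-eigenvector with eigenvalue $\lambda_n$ and split $\phi=\phi_+-\phi_-$, where $\phi_\pm\ge 0$ have disjoint supports $V^{\pm}$. Partitioning edges into same-sign (both endpoints in $V^+$, or both in $V^-$) and cross-sign (one in each), and using that $(\phi(x)+\phi(y))^2$ equals $(|\phi(x)|+|\phi(y)|)^2$ on same-sign edges but $(|\phi(x)|-|\phi(y)|)^2$ on cross-sign edges, the energy bound
$$\sum_{\{x,y\}\in E}(\phi(x)+\phi(y))^2 \leq (2-\lambda_n)\,d\|\phi\|_2^2$$
becomes a statement about $g:=|\phi|$: same-sign edges pay $(g(x)+g(y))^2$ in full, while cross-sign edges pay only $(g(x)-g(y))^2$. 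Applying Cauchy--Schwarz in the form $\bigl(\sum_e|g(x)-g(y)|(g(x)+g(y))\bigr)^2\le\bigl(\sum_e(g(x)-g(y))^2\bigr)\bigl(\sum_e(g(x)+g(y))^2\bigr)$, and then the co-area formula to the level sets $V_1(t)=\{\phi_+\geq t\}$, $V_2(t)=\{\phi_-\geq t\}$, one extracts a threshold $t$ for which the pair $(V_1(t),V_2(t))$ satisfies
$$\frac{2|E(V_1(t),V_2(t))|}{\operatorname{vol}(V_1(t))+\operatorname{vol}(V_2(t))} \;\geq\; 1-\sqrt{2(2-\lambda_n)},$$
i.e.\ $\bar h \geq 1-\sqrt{2(2-\lambda_n)}$, which rearranges to the claimed bound.

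The bipartite equivalence is then automatic: $\bar h=1$ forces $2-\lambda_n\le 0$ via the upper bound, hence $\lambda_n=2$ and bipartiteness; conversely, bipartiteness gives $-1\in\operatorname{spec}(T)$, so $\lambda_n=2$, and the lower bound yields $\bar h=1$. The main obstacle is the lower bound --- specifically, routing the signs of $\phi$ through the co-area argument so that the output is a \emph{dual}-Cheeger cut with $V_1\subseteq V^+$ and $V_2\subseteq V^-$ rather than an ordinary edge-Cheeger cut. Handling the cross-sign edges carefully is exactly where the $\sqrt{\cdot}$ loss enters, producing the squared factor $(1-\bar h)^2$ in complete analogy with the classical Alon--Milman proof.
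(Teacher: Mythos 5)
The paper offers no proof of this statement---it is quoted as a known result of Bauer--Jost \cite{myfav69} and Trevisan \cite{myfav70}---and your sketch is essentially the standard argument from those sources: the test function $\mathbf{1}_{V_1}-\mathbf{1}_{V_2}$ in the Rayleigh quotient for $I+T$ gives the Buser-type bound, and a signed threshold sweep plus Cauchy--Schwarz gives the Cheeger-type bound. The outline is correct; the only details to patch are that the Cauchy--Schwarz step should be applied uniformly over all edges to the products $|\phi(x)+\phi(y)|\,\bigl(|\phi(x)|+|\phi(y)|\bigr)$ (your stated form covers only the cross-sign edges, while same-sign edges contribute expected penalty $g(x)^2+g(y)^2$ and must be absorbed into the same sum), that the threshold should be drawn with $t^2$ uniform so each edge's expected penalty telescopes to $|g(x)^2-g(y)^2|$, and the usual connectivity caveat in the equivalence $\lambda_n=2\Leftrightarrow$ bipartite.
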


There is also the concept of higher order Cheeger constants introduced by Miclo in \cite{myfav91}.

Some recent works treating higher order Cheeger inequalities for general finite graphs are those by Lee--Gharan--Trevisan in \cite{myfav90} and Liu \cite{myfav72} (for the dual case) etc. 

\section*{Acknowledgements} I wish to thank Emmanuel Breuillard for the problem and also for a number of helpful discussions and advice on the subject. I wish to thank the anonymous referees for their comments and suggestions, thus improving the article. I also wish to acknowledge the support of the OWLF programme of the Mathematisches Forschunginstitut Oberwolfach.

\providecommand{\bysame}{\leavevmode\hbox to3em{\hrulefill}\thinspace}
\providecommand{\MR}{\relax\ifhmode\unskip\space\fi MR }
\providecommand{\MRhref}[2]{%
\href{http://www.ams.org/mathscinet-getitem?mr=#1}{#2}
}
\providecommand{\href}[2]{#2}

\end{document}